%% file: main.tex
\documentclass[11pt,reqno, oneside]{amsart}

\input{commands}

\usepackage[utf8]{inputenc}
\usepackage[english]{babel}
\usepackage{amsmath}
\usepackage{amssymb}
\usepackage{tikz-cd}
\usepackage{mathabx}
\usepackage{amsfonts}
\usepackage{amsthm}
\usepackage[shortlabels]{enumitem}
\usepackage[dvipsnames]{xcolor}
\usepackage{tikz-cd}
\usepackage{subfiles}
\usepackage[mathscr]{euscript}
\usepackage{extarrows}
\usepackage[pagebackref=true] {hyperref}
\usepackage{appendix}
\usepackage{bm}
\usepackage{mathtools}
\usepackage{float}
\usepackage{pdfpages}
\usepackage{transparent}
\usepackage{changepage}
\usepackage{wrapfig}
\usepackage{todonotes}
\usepackage{relsize}
\usepackage{dsfont}
\hypersetup{
    colorlinks=true,
    linkcolor=Blue,
    filecolor=red,      
    urlcolor=Blue,
    citecolor=Blue,
}

\newtheorem{theorem}{Theorem}[section]

\theoremstyle{definition}
\newtheorem{definition}[theorem]{Definition}

\theoremstyle{remark}
\newtheorem{remark}[theorem]{Remark}

\newtheorem*{notation*}{Notation}

\makeatletter
\@addtoreset{equation}{section}
\@addtoreset{theorem}{section}
\makeatother

\makeatletter %%\def\l@subsection{\@tocline{2}{0pt}{1pc}{5pc}{}}
 \def\l@subsection{\@tocline{2}{0pt}{2pc}{6pc}{}} \makeatother

  \bigskip
  \footnotesize

\begin{document}

\title{Classical cylindrical contact homology is not invariant over $\bZ$}

\author{Soham Chanda}
\date{\today}

\maketitle
\begin{abstract}
   We show that classical cylindrical contact homology over $\mathbb{Z}$, equipped with its decomposition by free homotopy classes, is not invariant under changes of hypertight contact form. Specifically, we construct two nondegenerate hypertight perturbations of the standard Seifert contact form on $\Sigma(2,4,5)$ whose cylindrical contact homologies in the regular-fiber class are respectively torsion-free and contain $\mathbb{Z}/5$-torsion.
\end{abstract}

\tableofcontents

\input{example}

\bibliographystyle{amsalpha}
\bibliography{bib}

\end{document}

%% file: commands.tex
\newcommand{\Mbar}{\overline{\mathcal M}}

\newcommand{\bC}{\mathbb{C}}

\newcommand{\bH}{\mathbb{H}}

\newcommand{\bQ}{\mathbb{Q}}
\newcommand{\bR}{\mathbb{R}}

\newcommand{\bZ}{\mathbb{Z}}
\newcommand{\cP}{\mathcal{P}}

\newcommand{\cS}{\mathcal{S}}

\newtheorem{intthm}{Theorem}

\DeclareMathOperator{\glue}{\normalfont\text{glue}}
\DeclareMathOperator{\forget}{\normalfont\text{forget}}

%% file: example.tex
\section{Introduction }

% Just like the various versions of orbifold Morse homology are non-equivalent \cite{BaoLiu2025,ChoHong2014}, the positive and negative version of the cylindrical contact homologies are non-equivalent. By this we mean that there is an example where the two homology theories differ. In particular, we will exhibit that $CCH^+$ will be torsion-free while $CCH^-$ will have torsion for the Brieskorn manifold $\Sigma(2,4,5)$.

The first published instance of contact homology can be traced to Eliashberg in \cite{eliashberg1998invariants} and reappears in the seminal paper \cite{egh00}. 
% Given a contact manifold $(Y,\lambda)$, we denote the set of \textit{good}\footnote{Being good is a property how cyclic reparameterization of Reeb orbits affect orientation lines.} Reeb orbits as $\cP_{good}$. 
When a contact manifold $(Y,\xi)$ is equipped with a contact form $\lambda$ and an admissible almost complex $J$ on the symplectization $\hat Y$, the contact DGA is defined as the free supercommutative dg algebra generated by $\cP_{good}$ where the differential is constructed by counting rigid punctured $J$-holomorphic genus zero curves with exactly one input. The homology of the contact dga over rational coefficients is an invariant up to isotopy of contact structure and almost complex structure \cite{pardon2019contact,bao2023semi}.

There is a simpler construction of an invariant homology theory for \textit{hypertight} contact form $\lambda_\circ$, i.e. when there are no contractible Reeb orbits which was outlined in \cite[\S 1.9.2]{egh00}. The chain complex in this case is simply generated by elements of the set of good Reeb orbits, $\cP$, $$CC(Y,\lambda_\circ,J) = \bigoplus_{\gamma \in \cP}\bQ \langle\gamma \rangle,$$and the differential is obtained from counting rigid $J$-holomorphic cylinders. The non-contractibility of Reeb orbits ensure that the SFT compactification of moduli of stable cylinders consists of cylindrical SFT buildings. The details of cylindrical contact homology got developed later in \cite{yau2004cylindrical} for the subcritical case and in \cite{baohonda1,hutchings2014cylindrical} for the three dimensional case using automatic transversality techniques inspired  from \cite{Wendl2010Automatic}. The introduction of virtual techniques of \cite{pardon2019contact,bao2023semi} enabled the construction of cylindrical contact homology for any hyper-tight contact manifold.

\subsection{Context}

For a non-hypertight contact form, due to codimension one breaking of SFT spheres, one needs to incorporate multiple output curves in the differential. Although one might try to order the outputs, in the boundary of a (virtual) one dimensional moduli space, the Reeb orbits that appear in between two levels do not come in order. The classical fix to this issue is to use the supercommutative dga generated by Reeb orbits which forces a $S_n$ symmetry, which in turn requires a $\frac{1}{n!}$ term in the definition of the differential. Hence in the non-hypertight case, the classical contact homology definition necessarily needs $\bQ$ coefficients instead of $\bZ$.

On the other hand, for cylindrical contact homology, \cite{hutchings2014cylindrical,baohonda1} notice that the differential counts for cylindrical contact chain complexes are integers. Moreover, the moduli spaces that are used in defining the contact differential can be  derived \textit{manifolds}, instead of being derived \textit{orbifolds} as discussed in Remark \ref{refl}. Choosing to work over rationals was \textit{sufficient} for proofs of invariance of cylindrical contact homology in the literature, but it was unknown if such a choice was \textit{necessary}.

The issue of defining contact homology with integer coefficients has remained unresolved in the community, as noted in \cite[Pg 15]{pardon2019contact}. One definitely needs to resolve an issue of how to bypass the need of the $\frac{1}{n!}$ for using symmetrized words in contact homology but such worries become immaterial for cylindrical contact homology. In his blog post \cite{Hutchings2014IntegerCoefficients}, Hutchings outlines a heuristic based on some conjectural curve counting results that a certain chain map in cylindrical contact homology in the three dimensional case needs to be defined using rational coefficients.  

\subsection{Main Results}
In this article we show that the \textit{classical} (as proposed in \cite{EliashbergGiventalHofer2000}) construction of cylindrical contact homology is not invariant under change of hypertight contact forms with $\bZ$ coefficients. We will use computations from \cite{haney2022cylindrical} to get a filtered homology computation.

\begin{intthm}[Theorem \ref{thm:comp}]\label{int:A}
     The Brieskorn manifold, $\Sigma(2,4,5)$, with its standard Morse-Bott contact form, has two families of hypertight contact forms $\lambda^T_{\pm}$ that are isotopic to each other, but the corresponding filtered ($<T$) cylindrical contact homologies are not quasi-isomorphic to each other since one has torsion while the other does not. 
\end{intthm}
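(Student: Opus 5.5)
The plan is to realize $\Sigma(2,4,5)$ as a Seifert fibered space over the orbifold $S^2(2,4,5)$. Its standard contact form $\lambda$ is a connection form, so the Reeb flow generates the $S^1$-action. Below action $T$, the Reeb orbits come in Morse--Bott families. These are the whole orbifold base, covering the regular fiber class $k$ times for $kL<T$ with $L$ the fiber length, together with isolated exceptional orbits over the three orbifold points, of multiplicities $2$, $4$, $5$. I will work in the free homotopy class $h$ of the regular fiber. There the relevant generators are the regular fiber family and the appropriate iterates of the exceptional fibers: the $2$-fold, $4$-fold and $5$-fold covers of the exceptional orbits over the points with isotropy $\bZ/2$, $\bZ/4$, $\bZ/5$, which all represent $h$. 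Hypertightness will follow because the class of the fiber has infinite order in $\pi_1$, since the base orbifold is hyperbolic ($1/2+1/4+1/5<1$). So all Reeb orbits of $\lambda$ and of small perturbations are noncontractible.

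The two forms $\lambda^T_{\pm}=(1+\epsilon \pi^*f_\pm)\lambda$ will be built from two orbifold Morse functions $f_\pm$ on $S^2(2,4,5)$, with $\epsilon$ small relative to $T$. The standard Morse--Bott perturbation then says that orbits of action $<T$ correspond to critical points of $f_\pm$ together with covering multiplicity. Following \cite{haney2022cylindrical}, the cylinders between them, in low action and for a suitable $J$ pulled back from the base, correspond to gradient flow lines of $f_\pm$ with orbifold weights. The differential on the class $h$ is then an orbifold Morse complex. The key choice is the orientation/sign convention at the orbifold points, namely whether a critical point with isotropy $\bZ/m$ sits as an orbifold maximum or minimum. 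The orbifold weight then enters as a factor $m$ or $1$ in the count. Concretely, I will take $f_+$ to have its orbifold critical points as maxima. Then the cylinder counts into the regular minimum are multiplied by the isotropy orders, and since these are needed anyway with all coefficients $\pm1$ after dividing by automorphisms, the complex computes $H_*(|S^2|;\bZ)$, which is torsion-free. For $f_-$ I will place the $\bZ/5$ point as a minimum. Then the unique flow line, or a regular family modulo $\bZ/5$, from an index-one point to it is counted with multiplicity $5$. This produces a differential entry $5$ with no compensating map, hence a $\bZ/5$ summand.

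The steps in order are as follows. First, identify the generators in class $h$ and their Conley--Zehnder gradings for each $f_\pm$. Second, show that all Reeb orbits with action $<T$ lie in the Morse--Bott neighborhoods, with no spurious orbits. Third, invoke \cite{haney2022cylindrical} to identify the filtered cylinder counts with the weighted orbifold gradient trajectories, including signs and multiplicities and the good/bad orbit conditions. In particular, the even iterates of the $\bZ/2$ and $\bZ/4$ points may be bad and drop out. Fourth, compute the homology of the resulting small integer matrices, checking that it is free for $f_+$ while there is $\bZ/5$ in degree $0$ for $f_-$. Fifth, note that $\lambda^T_\pm$ are connected by the linear path of contact forms $(1+\epsilon\pi^*((1-s)f_++sf_-))\lambda$. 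This gives the isotopy, and by Gray's theorem the contact structures are isotopic. Since quasi-isomorphic complexes of free $\bZ$-modules have isomorphic homology, the differing torsion gives the conclusion.

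I expect the main obstacle to be the third step: getting the integer multiplicities right. That means deciding whether a cylinder asymptotic to a $k$-fold covered orbit is counted with weight $1$ or with weight equal to the covering multiplicity, and ensuring that the $5$ survives rather than being cancelled by a bad-orbit or sign convention. I would first verify the signs directly on the local model near the $\bZ/5$ point, using the explicit $\bZ/5$-quotient of a holomorphic cylinder in $\bR\times S^1\times\bC$. Then I would check against the rational answer, which must agree for both forms because rational cylindrical contact homology is invariant.
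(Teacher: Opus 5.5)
Your overall strategy matches the paper's. You take two $S^1$-invariant perturbations $(1+\epsilon\pi^*f)\lambda$, identify cylinder counts with weighted gradient counts, and compare torsion. There is a genuine gap, though: the step that carries the theorem, namely which integer matrices occur and where a $5$ survives, is set up incorrectly, for two independent reasons.

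\textbf{First, the Seifert data are wrong.} The base of $\Sigma(p,q,r)$ is $S^2(p,q,r)$ only when the exponents are pairwise coprime. For $(2,4,5)$ the Reeb action has weights $(10,5,4)$:
\begin{itemize}
\item $\{y=0\}$ is one orbit with isotropy $\bZ/2$;
\item $\{z=0\}$ splits into $\gcd(2,4)=2$ orbits with isotropy $\bZ/5$;
\item $\{x=0\}$ is a single regular orbit.
\end{itemize}
So the base is $S^2(2,5,5)$. In the fiber class, the generators over orbifold points are the double cover of one exceptional orbit and the $5$-fold covers of \emph{two} others; there is no order-$4$ fiber. This is not cosmetic. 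In a configuration like the paper's, each saddle is joined by exactly two gradient lines to the extrema of the relevant type, so its row or column has two weighted entries $\pm m_u,\pm m_w$. After normalizing signs along the resulting tree, the relations just identify all the $m_v e_v$. Hence the torsion of the cokernel has order $\bigl(\prod_v m_v\bigr)/\operatorname{lcm}_v(m_v)$, whatever the shape of the tree. With your weights $1,2,4,5$ this gives $\bZ/2$, never $\bZ/5$. A lone entry $5$ next to a weight prime to $5$ produces no $5$-torsion, so ``an entry $5$ with no compensating map'' is not a valid mechanism. With the true weights $1,2,5,5$ one gets $\bZ/5$. It comes precisely from the saddle joining the two order-five points: the row $(0,0,5,-5)$ in the paper's matrix, whose Smith normal form is $(1,1,5)$.

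\textbf{Second, the multiplicity question is fixed by the definition, and your answer to it is reversed.} The differential in the statement counts cylinders with an asymptotic marker at the \emph{positive} end, constrained to the base point; that is, $\partial=\delta\kappa$. A cylinder whose positive end is the $m$-fold cover of an exceptional orbit, and whose negative end is simple (so it has no automorphisms), is counted $m$ times, once for each preimage of the base point. A cylinder with a simple positive end is counted once. Consequently:
\begin{itemize}
\item with the orbifold points as minima (the paper's $H$), every saddle-to-minimum entry is $\pm1$ and the homology is free;
\item with the orbifold points as maxima (the paper's $1-H$), the maximum-to-saddle matrix has entries $\pm2,\pm5$ in the orbifold columns and cokernel $\bZ/5$.
\end{itemize}
For $f_+$ you obtain $\pm1$ coefficients at orbifold maxima by ``dividing by automorphisms''. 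That division is exactly the rational normalization that integral counts do not perform. For $f_-$ you put the factor $5$ on a flow line \emph{into} an orbifold minimum, which belongs to the negatively marked differential $\kappa\delta$, not the one in the statement. As written, your $f_+$ is the complex that has torsion, and your $f_-$ acquires no factor $5$ at all.

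To repair the argument: work over $S^2(2,5,5)$, take the marker convention from the definition, write down the two explicit integer matrices, and compute their Smith normal forms.
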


\noindent 

Recall that cylindrical contact chain complexes naturally split by free homotopy classes. All the proofs of invariance in Floer theory are functorial under change of data. For the particular case of contact homology, the functoriality preserves the splitting by free homotopy classes of loops. By using genericity arguments, we show there are choices of data that produce different cylindrical contact homologies in certain free homotopy class. 

\begin{intthm}[Theorem \ref{thm:finale}]
  There are $C^2$ close hypertight contact forms near the Morse-Bott contact form $\lambda$ on $\Sigma(2,4,5)$ and admissible almost complex structures with enough regularity such that the cylindrical chain homologies, filtered by free homotopy classes, are not quasi-isomorphic over integers.
\end{intthm}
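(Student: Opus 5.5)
The strategy is to deduce the statement from Theorem A (Theorem \ref{thm:comp}) by a limiting and genericity argument. We start from the two families of hypertight perturbations $\lambda^T_{\pm}=(1+\epsilon f_\pm)\lambda$ of the Morse-Bott form on $\Sigma(2,4,5)$. Here $f_\pm$ are Morse functions (or their negatives) on the base orbifold $S^2(2,4,5)$, lifted to $\Sigma(2,4,5)$. We restrict attention to the free homotopy class $c$ of the regular fiber.

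\textbf{Step 1: chain complexes.} Following \cite{haney2022cylindrical}, the Reeb orbits of $\lambda^T_\pm$ with action $<T$ are the nondegenerate orbits over the critical points of $f_\pm$, including orbits over the orbifold points, which are multiple covers of exceptional fibers. For small $\epsilon$, every Reeb orbit in class $c$ has action below a fixed bound $T_0$: it is a fiber-type orbit of fixed multiplicity, and long orbits cannot be freely homotopic to the regular fiber since the fiber class has infinite order in $\pi_1$. Consequently, for $T>T_0$, the class-$c$ summand of the full complex coincides with the class-$c$ summand of the filtered ($<T$) complex.

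\textbf{Step 2: differential.} Choose $J$ to be the lift of a generic complex structure on the base. By the automatic transversality of \cite{Wendl2010Automatic}, as used in \cite{hutchings2014cylindrical,baohonda1}, all relevant cylinders are regular. Moreover, the rigid cylinders correspond to Morse flow lines of $f_\pm$ on the orbifold, and their counts are integers. This is exactly the setting of the computation in Theorem \ref{thm:comp}, so in class $c$ we get a torsion-free homology for $\lambda^T_+$ and a homology containing $\bZ/5$ for $\lambda^T_-$. The $5$ comes from the isotropy of the order-$5$ cone point: a flow line into that point lifts to a cylinder whose multiplicity factor is $5$ with one choice of sign convention, and is absent with the other.

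\textbf{Step 3: conclusion.} Since $\epsilon$ can be taken arbitrarily small, the two forms are $C^2$-close to $\lambda$ and to each other. If the full cylindrical contact homology were invariant over $\bZ$ via continuation maps, those maps would respect the splitting by free homotopy classes. That would give a quasi-isomorphism between the class-$c$ complexes, hence an isomorphism of integral homology, contradicting the presence of torsion in only one of them.

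I expect the main obstacle to be Step 2. It requires verifying transversality and the gluing/orientation signs for multiply covered orbits over the cone points, and checking that no cylinders in class $c$ escape the Morse-flow-line description (for instance, cylinders with ends on different covers). I would first try to handle these via the index and action bounds of \cite{hutchings2014cylindrical} together with the $S^1$-equivariance of the Seifert fibration.
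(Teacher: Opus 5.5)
There is a genuine gap. The forms you compare, $(1+\epsilon f_\pm)\lambda$ with $f_\pm$ pulled back from the base, are degenerate for every $\epsilon>0$. Their Reeb flow is $S^1$-invariant and integrable. The preimage of a level circle of $f_\pm$ is an invariant torus carrying a linear flow whose rotation number varies with the level. So at actions of order $1/\epsilon$ there are whole tori of closed orbits, and shrinking $\epsilon$ only pushes this degeneracy above action $T$.

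Consequently the ``full complex'' of your Step 1 and the continuation maps of your Step 3 are not defined for these forms. The statement, however, asks for hypertight forms and admissible $J$ with enough regularity, i.e.\ data for which the entire cylindrical chain complex exists. Supplying such data is exactly what the paper's proof adds to Theorem \ref{thm:comp}:
\begin{enumerate}
\item Perturb $(1+\epsilon_T\pi^*H)\lambda$ and $(1+\epsilon_T\pi^*(1-H))\lambda$ to globally nondegenerate forms that keep the orbits of action $<T$.
\item Check that these forms are still hypertight. This is not automatic, since the perturbation creates new orbits at high action. The paper uses that on the universal cover $\bH\times\bR$ one has $dt(R_{f\lambda})=\frac{1}{f}+y\partial_y(\frac{1}{f})>c>0$ for $f$ close to $1$, so no Reeb orbit lifts to a closed loop there.
\item Choose new admissible $J$'s with enough regularity for which the regular rigid cylinders of Theorem \ref{thm:comp} persist.
\item Argue that every orbit created by the perturbation lies outside the regular-fiber class, so it cannot affect the subcomplex supported on $\cS_{tor}$.
\end{enumerate}
Your lifted-from-the-base $J$ and the flow-line description in Step 2 belong to the $S^1$-invariant forms. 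They have to be transported to the nondegenerate data by such a persistence argument, not simply reapplied.

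A smaller gap is the action bound in Step 1. That the fiber class $h$ has infinite order only shows the classes $h^k$ are pairwise distinct. It does not show that a long orbit must lie in some $h^k$ with $k\neq 1$. You need an estimate tying action to fiber winding, and the same monotonicity of $t$ supplies one. The class $h^k$ is central and acts on the universal cover as a translation by $kL$ in $t$, where $L$ is the regular fiber period. Hence an orbit in class $h^k$ has period less than $kL/c$.

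On a minor point, the factor $5$ arises when the $5$-fold cover of an order-$5$ exceptional fiber is the \emph{positive} end of the cylinder. This happens for the perturbation by $1-H$, where the cone points are maxima and the flow lines leave them, and the factor comes from the positive asymptotic marker.
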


%  Thus, one might be tempted to believe that cylindrical contact homology can be \textit{easily} defined over integers. 

% % where $\cP$ is the  and the differential counts 

% One reason contact homology is defined over $\bQ$ instead of $\bZ$ stems from the fact that there are multiple outputs and we use symmetric words to read the output. 

% Interestingly, this is not the case when you require your definition to be invariant of your choice of contact form. In this note we share an example that exhibits the following can't be true :

% \begin{quote}
%     For a hypertight contact manifold $(Y,\lambda)$, there is a chain complex, $(CC(Y,\lambda),\partial)$ constituted by free-$\bZ$ module generated by good Reeb orbits, and differential from ``counting points of the derived Manifold of moduli of \textit{asymptotically marked} $J$-holomorphic cylinder buildings " whose homology over $\bZ$ is \textit{independent of } a hypertight $\lambda$.
% \end{quote}

\noindent In particular, this proves that the `classical' cylindrical contact homology package, equipped with functorial morphisms from exact cobordisms,  introduced in \cite{egh00} and later rigorously constructed in  \cite{baohonda1, hutchings2014cylindrical,hutchings2022s}
% 
% \footnote{Hutchings-Nelson's paper \cite{hutchings2022s}
%  has many versions of contact homology. I am talking about their $\partial_+ = \delta \kappa$ and $\partial_-=\kappa \delta$ version. They prove that the $S^1$ equivariant contact homology is actually an invariant over $\bZ$. } 
 % 
 is not invariant over $\bZ$  while preserving the splitting by free loop.

\subsection{Acknowledgements}
This article stemmed from an interaction with Michael Hutchings at ICM 2026, he introduced the two versions of cylindrical contact homology and the open question of whether those chain complexes are quasi-isomorphic. Thus, I gratefully acknowledge the support the National Science Foundation under Grant No. 2529637 which provided  travel support for participation at the ICM. I thank Michael Hutchings for the question and John Pardon for a valuable discussion. I would also like to thank Eric Kilgore for numerous conversations that helped me get a clearer picture of virtual curve counting using abstract perturbations and form my understanding as collected in Remark \ref{refl}.

\subsection{AI Disclosure}
ChatGPT 5.6 Sol was used to convert a hand-drawn sketch to an svg file for Figure \ref{fig:s} and to verify computations in \S \ref{subsec:comp}. It was also used for literature review, spellcheck, and grammar check. No part of the text except some LaTeX equations and diagrams was generated using LLMs.

 % \section{Brieskorn Manifolds and Cylindrical Contact Homology}

\section{Background on cylindrical contact homology}

We briefly recall the construction of cylindrical contact homology as treated in \cite{hutchings2014cylindrical,baohonda1}.
Given a triple $(Y,\lambda,J)$ of a hypertight contact manifold $(Y,\lambda)$ with a $\lambda$-admissible almost complex structure, we describe some relevant moduli spaces of $J$ holomorphic cylinders. Fix choices of base points $b_\gamma$ for all unparameterized Reeb orbits $[\gamma]$, equivalently fix parameterizations $\gamma :S^1 \to Y$ for all Reeb orbits.
Denote the SFT moduli space of $J$ cylindrical buildings between Reeb orbits $\gamma_+,\gamma_-$ with a positive asymptotic marker constrained to the base point $b_{\gamma_+}$ as 
$$\Mbar^+(\gamma_+,\gamma_-).$$

% $$\Mbar^+(\gamma_+,\gamma_-) = \{ u:\bR \times S^1 \to \bR \times Y | \delbar_Ju=0, \lim_{t\to \infty} u_Y(s,t)=\gamma_+(t),  , \lim_{t\to -\infty} u_Y(s,t)=\gamma_-(t+\theta) \text{ for some } \theta   \}/\sim$$
% where $u\sim u'$ if the maps are related by the domain $\bR$ reparameterization or a target $\bR$ translation. Note that this is equivalent to

\begin{definition}[Enough regularity]
    An admissible almost complex $J$ structure has \textit{enough regularity} when all the moduli spaces $\Mbar^+(\gamma_+,\gamma_-)$ of $J$-holomorphic cylinder-buildings of virtual dimension 1 and lower are regularly cut out.
\end{definition}
\begin{theorem}[\cite{hutchings2014cylindrical,baohonda1}]
    A generic choice of $J$ has enough regularity for three dimensional contact manifolds.
\end{theorem}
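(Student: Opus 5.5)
The plan is the standard Sard–Smale argument, organized so that the somewhere-injectivity of the relevant curves is automatic and only a countable family of conditions has to be imposed.

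\emph{Step 1: reduction.} Since $\lambda$ is nondegenerate and hypertight, for each action bound $L$ there are only finitely many good Reeb orbits $\gamma_\pm$ of action $<L$, hence finitely many pairs $(\gamma_+,\gamma_-)$. By Gromov compactness, each $\Mbar^+(\gamma_+,\gamma_-)$ with $\mathcal{A}(\gamma_+)<L$ is compact once the ECH/SFT energy is bounded by $\mathcal{A}(\gamma_+)$. We therefore aim to show that, for each $L$, the set $\mathcal{J}_L$ of admissible $J$ for which every cylinder (and every level of every building) of Fredholm index $\le 1$ between orbits of action $<L$ is cut out transversally is open and dense. Taking the intersection over $L\in\bN$ then gives a comeagre set, which is what ``generic'' means here. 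Openness follows from compactness together with the fact that surjectivity of the linearized operator is an open condition. Once each level of a building is regular, the standard gluing analysis of \cite{hutchings2014cylindrical,baohonda1} shows that the compactified space is regularly cut out in the sense of the definition of enough regularity.

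\emph{Step 2: somewhere-injective cylinders.} For a somewhere-injective cylinder $u$, consider the universal moduli space over $\mathcal{J}^\ell$, the $C^\ell$ admissible structures perturbed away from a neighbourhood of the orbits, or more simply all $\bR$-invariant $\lambda$-compatible $J$. Its linearization is surjective by the usual argument of Dragnev and McDuff–Salamon: an element of the cokernel annihilated by all variations $Y$ of $J$ vanishes near an injective point, and hence vanishes identically by unique continuation. Sard–Smale applied to the projection to $\mathcal{J}$, followed by the Taubes trick to pass from $C^\ell$ to $C^\infty$, gives genericity for simple curves. Note that the curves we use are nonconstant in the $Y$-direction, since a trivial cylinder lies over a single orbit. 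Trivial cylinders $\bR\times\gamma$ have index $0$ and are automatically regular by the nondegeneracy of $\gamma$.

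\emph{Step 3: multiply covered cylinders (the main obstacle).} A cylinder $u$ that is not somewhere injective factors as $u=v\circ\phi$, where $\phi$ is a $d$-fold cover of cylinders and $v$ is simple. Genericity over $J$ cannot make $u$ regular directly, and this is where the three-dimensionality enters. In dimension $3$, I would invoke Wendl's automatic transversality \cite{Wendl2010Automatic}: an immersed genus-zero curve $u$ with $\operatorname{ind}(u) > c_N(u) = \tfrac12\bigl(2g-2+\operatorname{ind}(u)+\#\Gamma_0\bigr)$ is regular for every $J$. For a cylinder, $g=0$ and $\operatorname{ind}(u)\le 1$, and $\#\Gamma_0$ counts the ends at even (positive hyperbolic) orbits. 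I would therefore argue as follows. First, if $v$ is regular and generic, then $\operatorname{ind}(v)\ge 1$ unless $v$ is trivial, by $\bR$-translation. Second, the index formula $\operatorname{ind}=\mathrm{CZ}(\gamma_+)-\mathrm{CZ}(\gamma_-)$, together with the growth of CZ indices under iteration in dimension $3$, gives that $\operatorname{ind}(u)\le 1$ forces constraints on the parities of the ends, which is exactly the analysis in \cite[\S 3--4]{hutchings2014cylindrical}. Third, one checks the automatic transversality inequality in the remaining cases, after confirming that $u$ is immersed; this holds generically, since $v$ is immersed for generic $J$ and $\phi$ is an unbranched cover of cylinders. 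The delicate point is the equality case $\operatorname{ind}=1$, with one end at an even orbit and $\operatorname{ind}(u)=c_N+\tfrac12$. I expect the hardest bookkeeping to lie in ruling out covers whose index drops below that of $v$. If that fails, I would fall back on Hutchings–Nelson's observation that index $\le 1$ covers of index $\ge 1$ simple cylinders can only arise with bad orbits, which are excluded from $\cP$.
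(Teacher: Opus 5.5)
The paper gives no proof of this statement; it quotes Hutchings--Nelson and Bao--Honda. Your outline follows their strategy: Dragnev-type Sard--Smale for simple cylinders, then the behaviour of covers in dimension three, then Wendl's criterion. Step 3, however, leaves the one lemma that handles multiple covers unproved, and the fallback you offer is false.

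\textbf{The missing lemma.} You need that a $d$-fold cover $u=v\circ\phi$ of a nontrivial simple cylinder satisfies $\operatorname{ind}(u)\ge\operatorname{ind}(v)\ge1$. Without it you cannot exclude nontrivial covers of index $\le0$. Such curves can never be regular because of the free $\bR$-action, so they must be shown not to exist. The proof is short:
\begin{itemize}
\item In a trivialization of $v^*\xi$ (so $c_1=0$) one has $\mathrm{CZ}(\gamma^k)=f(k\theta)$, where $f(x)=\lfloor x\rfloor+\lceil x\rceil$ and $\theta$ is the rotation number.
\item Hence $\operatorname{ind}(v)=f(\theta_+)-f(\theta_-)$ and $\operatorname{ind}(u)=f(d\theta_+)-f(d\theta_-)$.
\item For $x\le y$, $f(y)-f(x)$ is the number of integers in $(x,y]$ plus the number in $[x,y)$. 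This count cannot drop when $x,y$ are multiplied by $d$.
\item Finally, $\operatorname{ind}(v)\ge1$ forces $\theta_+>\theta_-$.
\end{itemize}

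\textbf{The fallback fails.} Index $\le1$ covers do not only occur at bad orbits. The $d$-fold cover of an index-$1$ cylinder from a positive hyperbolic orbit with $\theta=n$ to an elliptic orbit with $\theta\in(n-\tfrac1d,n)$ has index $1$ and only good ends. Iterated lifts of gradient cylinders in Morse--Bott perturbations are of exactly this type.

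\textbf{The ``delicate equality case'' is an arithmetic slip.} Since $\operatorname{ind}(u)\equiv\#\Gamma_0 \pmod 2$, $c_N$ is an integer. For $\operatorname{ind}(u)=1$ one has $\#\Gamma_0=1$, so $c_N=0<1$ with room to spare. Immersion is automatic, because $Z(du)\le\operatorname{wind}_\pi(u)\le\lfloor\mathrm{CZ}(\gamma_+)/2\rfloor-\lceil\mathrm{CZ}(\gamma_-)/2\rceil=0$.

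\textbf{You also aimed at the wrong index range.} $\Mbar^+$ is an $\bR$-quotient, and the differential counts its virtual-dimension-$0$ part. So ``virtual dimension $\le1$'' means Fredholm index $\le2$. Restricting to index $\le1$ drops the one-dimensional spaces, and with them every genuine building; in index $\le1$ there are none, so your gluing sentence does no work.

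Index $2$ is where bad orbits actually enter. By the counting above, a multiply covered index-$2$ cylinder is one of two kinds:
\begin{itemize}
\item A cover of an index-$2$ cylinder between elliptic orbits with $\theta_+\in(n,n+\tfrac1d)$ and $\theta_-\in(n-\tfrac1d,n)$. Then $\#\Gamma_0=0$ and $\operatorname{wind}_\pi(u)\le0$, so Wendl's criterion applies directly.
\item The double cover of an index-$1$ cylinder between a positive hyperbolic orbit ($\theta=n$) and a negative hyperbolic one ($\theta=n\pm\tfrac12$). This has two even ends, one of them bad. Wendl's inequality $\operatorname{ind}>2g-2+\#\Gamma_0+2Z(du)$ becomes $2>2Z(du)$. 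It holds because $\operatorname{wind}_\pi(u)=2\operatorname{wind}_\pi(v)=0$.
\end{itemize}

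Finally, regularity of the compactified one-dimensional spaces at two-level buildings requires gluing regular, but possibly multiply covered, index-$1$ levels. This is what Bao--Honda and Hutchings--Nelson actually carry out, and your Step 1 only asserts it.
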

\noindent The cylindrical contact chain complex can be defined for a generic choice of $J$ due to automatic transversality results in dimension three \cite{hutchings2014cylindrical,baohonda1}.
% \footnote{i.e. a module with a prescribed morphism $\partial$}
For such a judicious choice of $J$, the contact chain complex over the ring $R$ is $$CC(Y,\lambda,J) = \bigoplus_{\gamma\in \cP_{good}} R\langle \gamma\rangle, \quad \partial(\gamma^+) = \sum \#\Mbar^+(\gamma_+,\gamma_-)\gamma_-$$
where $\# \Mbar$ is the algebraic count of the points of the moduli space, which by definition is 0 for components with nonzero virtual dimension. Invariance has been proved for the cylindrical case in \cite{baohonda1} and will appear in \cite{HutchingsNelsonInvariance} using traditional transversality techniques. We note that Pardon's virtual techniques\cite{pardon2019contact} also supply the necessary invariance statements over rational coefficients.
\begin{remark}
    There is an analogous  version of the chain complex with counts of \textit{negatively} marked cylinders as observed in \cite{nelson2015automatic,hutchings2022s}. In their language, the define $\partial=\delta \kappa$ where $\kappa(\gamma)$ is the multiplicity of the Reeb orbit and $\delta$ is the count of rigid cylinders \textit{without asymptotic marker.}
    Note that our definition of $\partial$ is just a geometric way of defining the same differential. The negative marked version would correspond to the differential $\kappa\delta$. The main construction of this article can also be used to show that these two versions of cylindrical contact homologies are not isomorphic over $\bZ$.
\end{remark}

\section{Brieskorn manifolds and computations}
 
The choice of using Brieskorn manifolds as the main example comes from trying to utilize that orbifold Morse homologies \cite{ChoHong2014,BaoLiu2025} are not invariant over integers without restricting to special subclasses of Morse functions.\footnote{Bao-Liu \cite{BaoLiu2025} use a special subclass of Morse functions that they call stable Morse functions and have conjectured invariance of certain homology theories.} Thus the way to find a contact manifold that satisfies the claims of Theorem \ref{thm:comp} one strategy is the following

\begin{itemize}
    \item Pick a smooth prequantum bundle $(Y,\lambda)$ over an orbifold, i.e. $Y$ is a smooth contact manifold with $S^1$ action inducing the Reeb flow such that $Y/S^1$ is an orbifold. Clearly $\lambda$ is a Morse-Bott contact form.
    
    \item Pick a Morse function $H$ with values in $[0,1]$ on $B=Y/S^1$ such that the $\bZ$ Morse homology of $H$ and $1-H$ are different.
    \item Show that the rigid cylinders in $\bR \times Y$ for the $\epsilon H-$perturbation and $\epsilon (1-H)$-perturbation of $\lambda$ are in correspondence with Morse flows on the base and thus arrive at a contradiction.
\end{itemize}
We show below that $\Sigma(2,4,5)$ can be used to implement our strategy.
\subsection{Cylindrical contact homology of Brieskorn manifolds}
We recall the results from \cite[\S 5]{haney2022cylindrical} about counting rigid pseudoholomorphic cylinders in symplectizations of  of Brieskorn manifolds.

\begin{definition}[Brieskorn 3-manifolds]
    The Brieskorn manifold for the triple of integers $(p,q,r)$ is the manifold 
    $$\Sigma(p,q,r) = \{ (x,y,z) \in \bC^3 | \; x^p + y^q + z^r = 0 \} \cap \{ |x|^2 + |y|^2 + |z|^2 = 1 \}.$$
\end{definition}

\noindent When $\frac{1}{p } + \frac{1}{q} + \frac{1}{r} < 1$, $\Sigma(p,q,r)$ the Milnor fillable contact structure admits a hypertight contact form $\lambda$. The Brieskorn manifolds for $(p,q,r)$ can be viewed as a principal $S^1$ bundle over an orbifold surface $S(p,q,r)$ via the Seifert fibration (\cite{neumann2006seifert,neumann2007graph}) with $s_j$ orbifold points of order $t_j$ where $j\in \{ 1,2,3 \}$ given by the relations
\[
\begin{aligned}
s_1 &= \frac{qr}{\operatorname{lcm}(q,r)},
& t_1 &= \frac{\operatorname{lcm}(p,q,r)}
               {\operatorname{lcm}(q,r)},\\
s_2 &= \frac{pr}{\operatorname{lcm}(p,r)},
& t_2 &= \frac{\operatorname{lcm}(p,q,r)}
               {\operatorname{lcm}(p,r)},\\
s_3 &= \frac{pq}{\operatorname{lcm}(p,q)},
& t_3 &= \frac{\operatorname{lcm}(p,q,r)}
               {\operatorname{lcm}(p,q)}.
\end{aligned}
\]
The standard contact form $\lambda$ on the Brieskorn manifolds are Morse-Bott, so we will perturb them using Morse functions. We collect the main result from \cite[\S 5]{haney2022cylindrical} that they use to compute cylindrical contact homology over $\bQ$ .
\begin{theorem}[Theorem 4.5, Corollary 5.1 \cite{haney2022cylindrical}] \label{thm:seb_coll}
    There is a function $H$ on $S(p,q,r)$, $H:S(p,q,r)\to[0,1],$ that is Morse away from the orbifold points,  such that
    \begin{itemize}
        \item it has unique maxima $y$,
        \item it has minima exactly at the orbifold points $\{v_{ij}\}$, $i\in \{ 1,2,\dots,s_j\}$ \item near the orbifold points $H$ is modeled by $|\phi|^2$ for certain automorphic forms $\phi$ on $\bH \times \widetilde {\bC^\times}$.
    \end{itemize}
    Let $x_1,\dots , x_r$ denote the critical points of index one. For every $T>0$ there is an $\epsilon>0$ such that the following holds.  The contact form $(1+\epsilon\pi^*H)\lambda$ has non-degenerate orbits upto action $T$ with simple Reeb orbits  $\gamma_{\ast}$, over the critical point $\ast$ of the function $H$. There is a choice of cylindrical almost complex structure $J_\epsilon$ that has enough regularity for computing cylindrical contact homology and the only rigid contributions are from lifts of gradient trajectories of $H$.
\end{theorem}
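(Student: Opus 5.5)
The plan is to follow Haney's strategy in \cite{haney2022cylindrical}, using Milnor's description of $\Sigma(p,q,r)$ as a quotient $\Gamma\backslash\widetilde{SL_2(\bR)}$. Under this description the Seifert projection $\pi$ becomes the quotient map to the orbifold $S(p,q,r)=\Gamma\backslash\bH$, and the standard contact form $\lambda$ becomes the connection form of the $S^1$-action.

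\textbf{Step 1: constructing $H$.} Pick an automorphic form $\phi$ for $\Gamma$ whose zeros lie exactly at the orbifold points $v_{ij}$; the formulas for $s_j,t_j$ count these points. Then $|\phi|^2$, weighted by the appropriate power of the hyperbolic metric, is a well-defined $\Gamma$-invariant function on $\bH$. After adding a small Morse correction away from the cone points and normalizing to $[0,1]$, I would show that its minima are exactly at the $v_{ij}$ and that it has a unique maximum $y$. The index-one points $x_1,\dots,x_r$ are then forced, and their number is fixed by the orbifold Euler characteristic.

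\textbf{Step 2: orbits of the perturbed form.} For $\lambda_\epsilon=(1+\epsilon\pi^*H)\lambda$, the Reeb vector field is $R/(1+\epsilon H)$ plus $\epsilon$ times the horizontal lift of the Hamiltonian vector field $X_H$. Choose $\epsilon<\epsilon(T)$ so small that no nonconstant periodic orbit of $X_H$ appears below action $T$. Then every Reeb orbit of action $<T$ is a fiber over a critical point $\ast$ of $H$, or a cover of such a fiber. Over a cone point the fiber is exceptional, so $\gamma_{v_{ij}}$ is a $1/t_j$ fraction of the regular fiber. The standard Morse--Bott computation gives nondegeneracy and
\[
CZ(\gamma_\ast^k)=2k\,c_1\text{-term}+\operatorname{ind}_H(\ast)-1 .
\]
At the orbifold points the local model $|\phi|^2$ supplies the rotation-number correction.

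\textbf{Step 3: almost complex structure, transversality, and the classification of rigid cylinders.} Take $J_\epsilon$ to be the lift of an orbifold complex structure $j$ on $S(p,q,r)$, so that projection to the base is $(J_\epsilon,j)$-holomorphic in the unperturbed limit. Three things then need to be checked.
\begin{itemize}
    \item Each gradient trajectory of $H$ between consecutive-index critical points lifts to a $J_\epsilon$-holomorphic cylinder. I would construct these by the Bourgeois/Morse--Bott gluing correspondence, or directly by an $S^1$-equivariant ansatz.
    \item Regularity is automatic. In dimension three, Wendl's automatic transversality applies to the immersed index-$\le 1$ cylinders, and the lifts are immersed.
    \item There are no other rigid cylinders. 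A $J_\epsilon$-cylinder of index $1$ between fibers projects, as $\epsilon\to0$, to a holomorphic map to $S(p,q,r)$ together with gradient data. A nonconstant projected component would be a branched orbifold cover of the base, which has positive genus contribution and large index. I would exclude it by the index formula combined with the action bound $<T$. The remaining limits are Morse--Bott buildings, namely gradient flow lines with fiber levels, and by index counting the only rigid ones are single trajectories.
\end{itemize}

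\textbf{Main obstacle.} I expect the third check, excluding non-gradient rigid cylinders uniformly in $\epsilon$, to be the main difficulty. The hardest part is near the exceptional fibers, where the local model is $\bC/\bZ_{t_j}$ and compactness must be run in the orbifold-uniformized chart. There I would first pass to the $t_j$-fold cover and run the argument equivariantly.
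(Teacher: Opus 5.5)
The paper does not prove this statement. It imports it from \cite{haney2022cylindrical}, and for arbitrary base Morse functions from \cite[\S 5.1]{nelson2020automatic} (see Remark~\ref{rem:anymorse}). Your outline is a reasonable reconstruction of that route. However, the third item of Step~3, which carries the real content, has a genuine gap: the exclusion step fails as written.

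First, your exclusion of levels with nonconstant projection has the index going the wrong way. Since $\frac1p+\frac1q+\frac1r<1$, the base is a hyperbolic orbifold. A level whose projection has degree $d>0$ therefore shifts the Fredholm index by a positive multiple of $d\,\chi^{orb}(S(p,q,r))<0$, so its index is very negative, not large. Moreover, your $S^1$-invariant $J_\epsilon$ is not generic, so negative index is no obstruction to existence. Your `positive genus' remark points to the right mechanism. A limit level is a genus-zero curve for the unperturbed lifted structure $J_0$, with one positive and one negative puncture. Its projection is a holomorphic orbifold map from a sphere with at most two cone points to $S(p,q,r)$. Lifting to $\bH$ and applying Liouville (or Riemann--Hurwitz) shows it is constant. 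The action bound plays no role here.

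Second, even with that repaired, compactness as $\epsilon\to 0$ only shows that each rigid $J_\epsilon$-cylinder is \emph{close} to a lifted gradient trajectory. To conclude that it \emph{is} one, and that each trajectory is counted exactly once, you need the surjectivity half of a Morse--Bott gluing theorem. You do not state it, and it would require separate orbifold analysis near the exceptional fibers.

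The missing idea is to use the symmetry you already have. The $S^1$-action preserves $\lambda_\epsilon=(1+\epsilon\pi^*H)\lambda$ and the lifted $J_\epsilon$. An index-one cylinder has exactly one end of even Conley--Zehnder index, so $2c_N=\operatorname{ind}-2+\#\Gamma_0=0$, and Wendl's automatic transversality makes it immersed and regular. Hence the index-one moduli space modulo $\bR$ is discrete, and the connected group $S^1$ must fix each of its points. Every rigid cylinder is therefore $S^1$-equivariant, its projection to the base is $t$-independent, and the projected Cauchy--Riemann equation reduces to the gradient flow of $H$.

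Two smaller points:
\begin{enumerate}
\item \emph{Enough regularity} concerns virtual dimension $\le 1$, i.e.\ index $\le 2$, not just index $\le 1$. With a non-generic $J_\epsilon$ you must also rule out nonregular cylinders of index $\le 0$; the corrected compactness argument does this for small $\epsilon$.
\item In Step~1 the unique maximum is not automatic and must be arranged, for instance by cancelling extra maxima against saddles away from the cone points.
\end{enumerate}
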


\begin{remark}\label{rem:anymorse}
    The existence of the admissible almost complex structure $J_\epsilon$ with rigid cylinder count given by gradient trajectory of $H$ does not depend on the special choice of the Morse function. The results in \cite[\S 5.1]{nelson2020automatic} prove that for any Morse function on the base, there is a choice of $J_\epsilon$ that makes rigid cylinder counts (up to a certain action) same as rigid gradient trajectories on the base.
\end{remark}
% \subsection{Positive and negative cylindrical homologies of $\Sigma(2,4,5)$}

 \subsection{Computations for $\Sigma(2,4,5)$}\label{subsec:comp}
 The base surface of $\Sigma(2,4,5)$ is a genus zero orbifold, whose isotropy can be computed from the relations above

\[
\begin{aligned}
s_1 &= 1 &&\text{usual manifold point,}\\
s_2 &= 1 &&\text{orbifold point of order } t_2=2,\\
s_3 &= 2 &&\text{orbifold points of order } t_3=5.
\end{aligned}
\]

\noindent The choice of Morse function as in Theorem \ref{thm:seb_coll} for the special case of $(2,4,5)$ has $4$ minima, with isotropy groups $\bZ/2\bZ$ for one of them and $\bZ/5\bZ$ for two of them and the other minima is a smooth point. The genus is zero and the Morse function has unique maxima, thus there are 3 index one critical points. Since the only rigid cylinders in $\bR \times \Sigma(2,4,5)$ are Morse trajectories, we can  compute the differentials explicitly.

\begin{figure}
    \centering
   \includegraphics[scale=0.5]{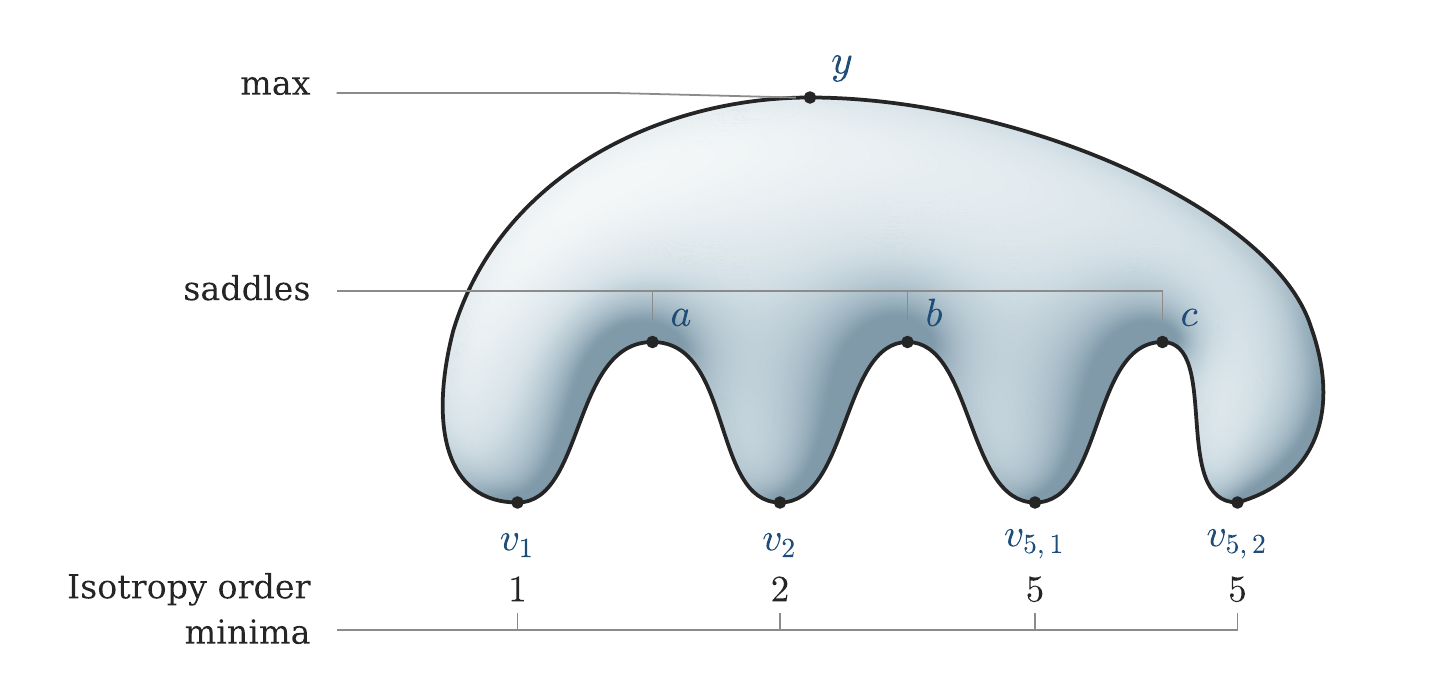}
    \caption{The function $H$  on $S(2,4,5)$. This figure was converted from handdrawn sketch to svg by ChatGPT 5.6. }
    \label{fig:s}
\end{figure}

%  Going from lower to higher action filtration, say from $T$ to $T+\delta$, does not introduce new rigid cylinders whose outputs are seen in the $T$ filtration. Recall the fact that homology and direct limits commute. To show that two chain complexes $C_+, C_-$ are not quasi-isomorphic, it is enough to prove that for large enough filtration level $T$, $C^{<T}_+$ has torsion in homology whereas $C^{<T}_-$ is torsion free.

Since we are in the Morse-Bott case, to recover the full contact homology one needs to take limit over increasing actions. We first exhibit the difference of homologies for any action larger that the action of a regular fiber (i.e. a fiber that is not over an orbifold point) of the circle fibration. Indeed for any fixed $T$, there is a $\epsilon_T>0$ such that all action-less-than-$T$-Reeb orbits of $(1+\epsilon_T\pi^*H)\lambda$ are non-degenerate.

\begin{theorem}\label{thm:comp}
With the choice of almost complex as outlined in Theorem \ref{thm:seb_coll} and Remark \ref{rem:anymorse}, the filtered cylindrical contact homology $CCH^{<T}(\Sigma(2,4,5),(1+\epsilon_T\pi^*H)\lambda)$ has no torsion and $CCH^{<T}(\Sigma(2,4,5),(1+\epsilon_T\pi^*(1-H))\lambda)$ has $\bZ/5\bZ$ torsion.
\end{theorem}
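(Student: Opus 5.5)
The plan is to reduce both filtered complexes to explicit orbifold Morse-type complexes on the base $S(2,4,5)$ and then compare them by hand. I would work in the free homotopy class of the regular fiber and take $T$ slightly larger than the action of a regular fiber, with $\epsilon_T$ small. Below action $T$ and in this class, the generators are the following. There is the simple orbit $\gamma_y$ over the maximum (respectively the minimum, for $1-H$), and the simple orbits $\gamma_{x_1},\gamma_{x_2},\gamma_{x_3}$ over the index-one points. There is the simple orbit over the smooth extremum. Finally, over each orbifold point $v$ of order $t\in\{2,5\}$ there is the $t$-fold cover $\gamma_v^{t}$ of the exceptional fiber, which is the cover lying in the regular-fiber class.

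First I must decide which of these covers are good. The Conley--Zehnder parity of $\gamma_v^{t}$ is determined by the local model $|\phi|^2$ of Theorem~\ref{thm:seb_coll} (and its negative for $1-H$). I expect the $\bZ/2$ point to possibly give a bad double cover, while odd covers such as $\gamma_v^5$ are always good. In either case this only affects the $\bZ/2$ summand and cannot destroy or create $\bZ/5$-torsion.

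Next I would use Theorem~\ref{thm:seb_coll} and Remark~\ref{rem:anymorse}: for both $\epsilon_T\pi^*H$ and $\epsilon_T\pi^*(1-H)$ there is a $J_\epsilon$ with enough regularity whose rigid cylinders are exactly lifts of index-one gradient trajectories. The remaining task is to compute the coefficient of each lifted trajectory in $\partial$, using the positively marked moduli spaces $\Mbar^+$, i.e. the $\delta\kappa$ convention.

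Fix a trajectory from a regular point to an orbifold point $v$ of order $t$; this is the situation for $H$, where the orbifold points are minima. The cylinders over it form an $S^1$-family of translates modulo the isotropy $\bZ/t$. Constraining the positive marker to the base point of the simple regular orbit picks out exactly one cylinder. Hence the coefficient $\langle\partial\gamma_x,\gamma_v^t\rangle$ is $\pm1$.

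For $1-H$ the orbifold points become maxima, and the trajectory runs from $v$ to an index-one point. Now the positive end is the $t$-fold cover $\gamma_v^t$. The marker at the base point can be matched in $t$ inequivalent ways, equivalently $\kappa(\gamma_v^t)=t$ multiplies $\delta$. So the coefficient is $\pm t$, in particular $\pm5$ at the two $\bZ/5$ points.

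With these coefficients the computation is elementary linear algebra over $\bZ$. Reading the incidences from Figure~\ref{fig:s}, I would write the two differentials $\partial$ as integer matrices and compute their Smith normal forms. For $H$, every nonzero entry is $\pm1$ (up to the possible $\bZ/2$ subtlety), so the homology is free. For $1-H$, the column of some index-one orbit $\gamma_{x_i}$ that receives a flow only from a $\bZ/5$ maximum has all entries divisible by $5$. Its image in the corresponding quotient is then $5\bZ$, giving a $\bZ/5\bZ$ summand. Signs are controlled by coherent orientations, but they cannot change divisibility by $5$.

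The main obstacle is step two: justifying rigorously that the marked count yields coefficient $1$ versus $t$, and not, for example, $t$ versus $1$ with the roles reversed. That requires a careful analysis of how the $\bZ/t$ isotropy acts on the lifts and on the asymptotic markers at the multiply covered end. I would settle it by writing cylinders over a gradient line explicitly in the local orbifold chart $\bC/\bZ_t$ with the Seifert $S^1$-action, and then counting marker positions modulo the deck action.
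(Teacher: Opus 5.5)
Your proposal follows the paper's argument: use Theorem \ref{thm:seb_coll} and Remark \ref{rem:anymorse} to reduce to lifted gradient trajectories, weight each trajectory by the multiplicity of its positive end ($\pm1$ for $H$, $\pm t$ for $1-H$, i.e.\ $\partial=\delta\kappa$), and compare the integer matrices; the paper's $\partial_{1-H}$ is exactly $\partial_H^{T}\,\mathrm{diag}(1,2,5,5)$, with cokernel $\bZ/5\bZ$, while $\partial_H$ has cokernel $\bZ$, and your marker count supplies the $1$ versus $t$ justification that the paper only asserts. Make the last step the actual Smith normal form rather than your shortcuts, since entries $\pm1$ alone do not force a free cokernel, and a row divisible by $5$ alone does not force $5$-torsion (here it does because $v_{5,2}$ flows only to $c$, which puts $\pm5\gamma_c$ in the image); also, the exceptional orbit over $v_2$ sits over an extremum and is elliptic, so its double cover is good and the paper keeps it with weight $2$.
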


\begin{proof}

We utilize the fact that the only differentials are the lifts of Morse gradients and perform a partial computation.  Set the following notations for Morse critical points 
    \begin{itemize}
        \item $y$ is the unique maxima,
        \item $v_1$, $v_2,v_{5,1},v_{5,2}$, are minimas with the convention that $v_\ast$ has isotropy $\bZ / \ast \bZ,$
        \item $a,b,c$ are saddles.
    \end{itemize}
    When we perturb by $\epsilon H$, we have the following matrices denoting linear maps $\bZ^3 \to \bZ^4$, from the saddles to the minima \[
\partial_{H}
=
\begin{pmatrix}
1 & 0 & 0 \\
-1 & 1 & 0 \\
0 & -1 & 1 \\
0 & 0 & -1
\end{pmatrix}
\] 
When we perturb by $\epsilon(1-H)$ instead, we have that the matrix from the maxima to the saddle points
\[
\partial_{1-H}
=
\begin{pmatrix}
1 & -2 & 0 & 0 \\
0 & 2 & -5 & 0 \\
0 & 0 & 5 & -5
\end{pmatrix}
\]
Note that  all other differentials vanish, thus these matrices are the only non-trivial chain differentials. Clearly one matrix creates torsion in cokernel while the other does not.
\end{proof}

Denote the set of free homotopy classes $S$ that support the torsion $\bZ/5\bZ$ in the minimal choice of $T$ as $\cS_{tor}$. This can be checked to be equal to the singleton set containing the regular fiber class of the Seifert fibration. On the other hand, given any cylindrical chain complex, $CC(Y,\lambda,J)$ and a subset $\cS$ of the free homotopy class of loops, the subcomplex supported on $\cS$ is simply the subcomplex of Reeb orbit generators which present one of the elements in $\cS$. 

\begin{theorem}\label{thm:finale}
There are $C^2$ close hypertight contact forms near the Morse-Bott contact form $\lambda$ on $\Sigma(2,4,5)$ and admissible almost complex structures with enough regularity such that the cylindrical chain complexes are not quasi-isomorphic over integers via chain maps that preserve the free homotopy class of generators. In particular, the homology of the subcomplexes supported on $\cS_{tor}$ are different.
\end{theorem}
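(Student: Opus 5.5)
The approach is to reduce Theorem \ref{thm:finale} to the filtered computation of Theorem \ref{thm:comp}, then pass from the action filtration to the splitting by free homotopy classes.

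\textbf{Step 1: choice of forms.} Take $\lambda_+ = (1+\epsilon\pi^*H)\lambda$ and $\lambda_- = (1+\epsilon\pi^*(1-H))\lambda$, with $H$ as in Theorem \ref{thm:seb_coll}. For $\epsilon$ small these are $C^2$ close to $\lambda$ and isotopic through contact forms $(1+\epsilon\pi^*((1-s)H+s(1-H)))\lambda$ defining the same contact structure.

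\textbf{Step 2: hypertightness.} I would argue that the perturbations have no contractible Reeb orbits at all, not only below action $T$. Below action $T$, orbits are lifts of fibers over critical points, and fibers of the Seifert fibration are noncontractible since $\frac12+\frac14+\frac15<1$. Above action $T$, I would either use the bound from \cite{haney2022cylindrical}, in which the relevant perturbations stay hypertight since orbits remain $C^0$-close to multiple fibers, or argue directly that any Reeb orbit of $\lambda_\pm$ is homotopic to a fiber multiple. I expect this to be the main obstacle if one wants honest hypertightness of the whole form rather than only of its action-truncation.

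\textbf{Step 3: isolating $\cS_{tor}$.} Let $\cS_{tor}=\{[F]\}$ be the regular fiber class. The generators in this class are the simple orbits $\gamma_y,\gamma_a,\gamma_b,\gamma_c,\gamma_{v_1}$ and the appropriate covers $\gamma_{v_2}^2,\gamma_{v_{5,1}}^5,\gamma_{v_{5,2}}^5$ of the exceptional fibers. All of these have action approximately equal to that of a regular fiber, say less than some $T_0$. Other orbits in this class, such as multiple covers of fibers representing $[F]$ via torsion in $\pi_1$, have strictly larger action. Since $[F]$ has infinite order in $H_1$ modulo torsion, I would check that only finitely many orbits lie in the class. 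Choose $\epsilon$ small enough that the subcomplex on $\cS_{tor}$ is exactly this finite set, so it coincides with the $\cS_{tor}$ part of the $<T$ complex. By Theorem \ref{thm:comp} and its displayed matrices, the class-$[F]$ homology is torsion free for $\lambda_+$, while for $\lambda_-$ it contains $\operatorname{coker}\partial_{1-H}$. Computing the Smith normal form of $\partial_{1-H}$ gives invariants $1,1,5$, hence a $\bZ/5$ summand.

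\textbf{Step 4: conclusion.} Any chain map preserving free homotopy classes restricts to a chain map between the $\cS_{tor}$-subcomplexes. If it were a quasi-isomorphism, it would induce an isomorphism on their homologies. This is impossible, since one homology has $\bZ/5$ torsion and the other does not. Regularity of the almost complex structures is supplied by Theorem \ref{thm:seb_coll} and Remark \ref{rem:anymorse}.
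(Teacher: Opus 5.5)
\textbf{Nondegeneracy is missing.} Your reduction to Theorem~\ref{thm:comp} agrees with the paper, and so do your restriction to the regular-fiber class, your Smith normal form $1,1,5$ for $\partial_{1-H}$, and your Step 4. The genuine gap is that you never produce nondegenerate contact forms. Theorem~\ref{thm:seb_coll} only asserts two things below the action level $T$: that $(1+\epsilon\pi^*H)\lambda$ is nondegenerate, and that $J_\epsilon$ has enough regularity. It says nothing above $T$, and the forms are in fact degenerate there. The Reeb flow of $\lambda_\pm$ projects to a reparametrized Hamiltonian flow of $H$ on $S(2,4,5)$. Its closed level curves have period of order $1/\epsilon$. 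Over each level curve whose fiberwise rotation number is rational, the whole torus above it is foliated by closed Reeb orbits, and such Morse--Bott families are degenerate. So the full complexes $CC(\Sigma(2,4,5),\lambda_\pm,J)$ in the statement are not defined, and neither are the chain maps your Step 4 restricts. Theorem~\ref{thm:seb_coll} and Remark~\ref{rem:anymorse} also do not give enough regularity for them. The missing step is exactly the content of the paper's proof:
\begin{enumerate}
\item perturb to nondegenerate forms $\lambda^H_\circ,\lambda^{1-H}_\circ$ that keep the orbits of action $<T$;
\item recheck hypertightness;
\item choose generic $J^H,J^{1-H}$ near $J_\epsilon$ so that the regular rigid cylinders of Theorem~\ref{thm:comp} persist;
\item show that no newly created orbit lies in $\cS_{tor}$, so none can enter the class-$[F]$ differential.
\end{enumerate}

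\textbf{The finiteness argument rests on a false fact.} This is exactly what step 4 above needs. $\Sigma(2,4,5)$ is Seifert fibred over a genus-zero orbifold with nonzero Euler number, so $b_1=0$. Hence $H_1$ is finite and $[F]$ is torsion there, and $\pi_1$ is torsion-free. The relevant fact is that the regular fiber $h$ is central of infinite order in $\pi_1$, so the free homotopy classes $h^k$ are pairwise distinct. That rules out other fiber iterates, but not orbits over level curves or orbits created by the further perturbation. What controls these is the estimate the paper uses for hypertightness: $dt(R_{f\lambda})>c$ on $\bH\times\bR$. This also settles your Step 2; your heuristic that high-action orbits stay $C^0$-close to multiple fibers fails for orbits over level curves. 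The condition is open in $f$ in the $C^1$ topology, so it survives the perturbation. Since $h$ acts by the central translation $t\mapsto t+\ell_F$, a closed orbit freely homotopic to $F$ lifts to a path from $(z,t)$ to $(z,t+\ell_F)$. Hence $\ell_F=\int dt\ge c\,\mathcal{A}(\gamma)$, so every class-$[F]$ orbit has action at most $\ell_F/c<T$. Consequently, for a small enough perturbation the class-$[F]$ generators are exactly your eight orbits. By regularity plus SFT compactness, the class-$[F]$ differential is then unchanged. With these two repairs your argument becomes the paper's.
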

\begin{proof}
The outline of the proof is to start from the particular choices of almost complex structure and contact forms required for the partial computation in Theorem \ref{thm:comp} and then approximate the choices by non-degenerate data that preserve the filtered homology computation.

One can choose $T$ larger than a very big multiple of the regular fiber action in Theorem \ref{thm:seb_coll} and the computation of Theorem \ref{thm:comp} shows that one chain complex has torsion in low degrees while the other doesn't. We will use this as a starting point, and perturb the contact form and almost complex structure to finish our proof.
    
    From genericity results, we can find non-degenerate contact forms close to $(1+\epsilon_T\pi^*H)\lambda$ and $(1+\epsilon_T\pi^*(1-H))\lambda$ that preserve the set of Reeb orbits of action at most $T$. We can check that for $\Sigma(2,4,5)$, small perturbations preserve hypertightness. Indeed from \cite{haney2022cylindrical} we have that $dt(R_{f\lambda}) =  \frac{1}{f} + y\partial_y (\frac{1}{f})$ in the universal cover $\bH \times \bR$. Thus by choosing $f$ close to $1$, and bounds from the hyperbolic metric $dt^2 + \frac{1}{y^2}(dx^2 + dy^2)$ one can find $c$ such that $dt(R_{f\lambda})>c$. Thus
     we get that no contractible orbits can appear in $f\lambda$ for $f$ close to $1$. 
    
    We denote the non-degenerate perturbations of the forms $(1+\epsilon_T\pi^*H)\lambda$ and $(1+\epsilon_T\pi^*(1-H))\lambda$ as $\lambda^{H/1-H}_\circ$. We can choose almost complex structures $J^{H}, J^{1-H}$ admissible with respect to $\lambda^{H/1-H}_\circ$  with enough regularity such that the cylinders appearing in the computation in Theorem \ref{thm:comp} persist. This can be ensured from usual persistence of regular curves.
    
    Owing to the choice of a large $T$, any new Reeb orbit that appears in the perturbations has to be of a different free homotopy class than the lowest action lifts of critical points considered in Theorem \ref{thm:comp}. Thus, new holomorphic cylinders  in the non-degenerate choices of data have topological obstructions from having an output at low-action Reeb orbits. This ends the proof since the above argument shows that the homology of the chain complexes $CC\left(\Sigma(2,4,5),\lambda^{H/1-H}_\circ, J^{H/1-H}\right)$ supported over $\cS_{tor}$ will be different.

    % $$CCH^{<T}(\Sigma(2,4,5),(1+\epsilon_T\pi^*H)\lambda) = CCH^{<T} (\Sigma(2,4,5), \lambda_\circ^{H})$$
    %  $$CCH^{<T}(\Sigma(2,4,5),(1+\epsilon_T\pi^*(1-H))\lambda) = CCH^{<T} (\Sigma(2,4,5), \lambda_\circ^{1-H})$$
    
\end{proof}

\begin{remark}\label{refl}[Reflections on invariance] The virtual techniques in \cite{pardon2019contact,chanda2025contact} can be used to show that the moduli space $\Mbar^+(\gamma_+,\gamma_-)$ are derived manifolds.  This follows from the observation that there are no non-trivial finite order automorphisms of a domain that fix a point and an asymptotic marker at the point. From now on we will drop the reference to Reeb orbits for notational clarity.

Although there is no isotropy in the moduli spaces, what goes awry about invariance under integer coefficients is that SFT compactness theorem does not give us a dMan equivalence of the form $$\partial \Mbar^+ \not\cong \Mbar^+ \times \Mbar^+ .$$Rather we get a correspondence  
\[
\begin{tikzcd}
& \Mbar^\pm \times \Mbar^+
\arrow[dl, "\glue"]
\arrow[dr, "\forget"]
& \\
\partial\Mbar^+
& &
\Mbar^+ \times \Mbar^+
\end{tikzcd}
\]
where the maps glue and forget are quotients by \textit{different} free actions by a cyclic group. Thus, to obtain true integral invariants, one needs to obtain (relative) virtual fundamental cycles that behave nicely with group actions. For example, if one wants to use abstract sections to obtain transversality, although one might achieve abstract transversality by sections instead of multisections for each of the lower spaces in the above diagram, the sections might not match under the pullback. In certain cases with automatic transversality one might get lucky and have integral differentials, but the whole invariance package does not go through. 

Another perspective is that cylindrical contact homology can be viewed as semi-infinite Morse homology on the orbifold $LY^\circ/S^1$ where $LY^\circ$ is the free loop space of $Y$ without the contractible loops and the $S^1$ action is by reparameterization. This orbifold has only finite isotropy occurring from multiply covered loops. The classical contact homology can be viewed as the orbifold Morse homology introduced in \cite{ChoHong2014} and it is known that their construction is not an invariant over $\bZ$. To obtain a Morse homology for an orbifold that is invariant over $\bZ$ coefficients, one way is to perform a Borel construction, see \cite{corrigan2020morse}. The corresponding construction in cylindrical contact homology for such a model would be the $S^1$ equivariant version introduced in \cite{hutchings2022s}.  It is interesting to note the discussion at the end of the blog-post \cite{Hutchings2014IntegerCoefficients} has a similar perspective, although I was unaware of this at the time of writing.

%, 

% Thus although one does not need multisections to define the counts in the differential, to show the differ

\end{remark}